\DeclareMathOperator{\Jac}{Jac}
\DeclareMathOperator{\USp}{USp}
\DeclareMathOperator{\SU}{SU}
\DeclareMathOperator{\Sp}{Sp}
\DeclareMathOperator{\U}{U}
\DeclareMathOperator{\ST}{ST}
\DeclareMathOperator{\Gal}{Gal}
\DeclareMathOperator{\End}{End}
\DeclareMathOperator{\diag}{diag}
\DeclareMathOperator{\TL}{TL}
\DeclareMathOperator{\AST}{AST}
\DeclareMathOperator{\LL}{L}
\DeclareMathOperator{\Tr}{Tr}
\newcommand{\Z}{\mathbb{Z}}
\newcommand{\Q}{\mathbb{Q}}
\newcommand{\C}{\mathbb{C}}
\newtheorem{theorem}{Theorem}[section]
\newtheorem{example}[theorem]{Example}
\newtheorem{proposition}[theorem]{Proposition}
\newtheorem{lemma}[theorem]{Lemma}
\newtheorem{definition}[theorem]{Definition}
\newtheorem{remark}[theorem]{Remark}
\author{Heidi Goodson}
\address{Department of Mathematics, Brooklyn College, City University of New York; 2900 Bedford Avenue, Brooklyn, NY 11210 USA}
\email{heidi.goodson@brooklyn.cuny.edu}
\author{Rezwan Hoque}
\address{Department of Mathematics, Brooklyn College, City University of New York; 2900 Bedford Avenue, Brooklyn, NY 11210 USA}
\email{rezwan.hoque98@bcmail.cuny.edu}
\title{Sato-Tate Groups and Distributions of $y^\ell=x(x^\ell-1)$}
\begin{document}

\begin{abstract}
 Let $C_\ell/\mathbb Q$ denote the nonsingular curve with affine model $y^\ell=x(x^\ell-1)$, where $\ell\geq 3$ is prime. In this paper we study the limiting distributions of the normalized $L$-polynomials of the curves by computing their Sato-Tate groups and distributions. We also provide results for the number of points on the curves over finite fields, including a formula in terms of Jacobi sums when the field $\mathbb F_q$ satisfies $q\equiv 1 \pmod{\ell^2}$.
\end{abstract}

\maketitle 


\section{Introduction}

Let $C_\ell/\mathbb Q$ denote the curve with affine model $y^\ell=x(x^\ell-1)$, where $\ell\geq 3$ is prime. The main goal of this paper is to study the limiting distributions of the normalized $L$-polynomials of the Jacobians of curves in this family. These curves have complex multiplication by $\Q(\zeta_{\ell^2})$, and so work of Johansson \cite{Johansson2017} implies that we can study these limiting distributions by computing the Sato-Tate groups of the Jacobians. Furthermore, work of Aoki in \cite{Aoki2002} shows that the Jacobians of these curves are nondegenerate, which makes this family of curves a natural example to study in the context of Sato-Tate groups.

Recall that the Sato-Tate conjecture, posed in the 1960s by Mikio Sato and John Tate (independently), is a statistical conjecture regarding the distributions of the normalized traces of Frobenius on elliptic curves without complex multiplication (CM). The conjecture has been proven for non-CM elliptic curves defined over totally real fields (see, for example, \cite{Clozel2008,Taylor2008}), and similar results are known for elliptic curves with CM. This conjecture was generalized to higher dimensional abelian varieties by Serre \cite{SerreNXP}. The generalized Sato–Tate conjecture for an abelian variety predicts the existence of a compact Lie group, referred to in the literature as the \emph{Sato-Tate group}, that determines the limiting distribution of coefficients of the normalized $L$-polynomials. While the conjecture is largely still open, it is known to be true in many cases, including for nongeneric abelian surfaces (see \cite{FiteSuthlerland2014,Johansson2017, Taylor2020}) and for abelian varieties with complex multiplication (see \cite[Proposition 16]{Johansson2017}). Even in cases where the conjecture has not yet been proven, there has been a great amount of interest in computing Sato-Tate groups. For example, there are classification results in dimension 2 \cite{Fite2012} and dimension 3 \cite{fitekedlayasuth2023} that determine all possible Sato-Tate groups that correspond to abelian varieties in these dimensions. We refer the reader to \cite{Banaszak2015,fitekedlayasuth2023,GoodsonCatalan,SutherlandNotes} for more background information on Sato-Tate groups and the generalized Sato-Tate conjecture.

Many of the results in the literature in this area are for specific families of abelian varieties with CM since the generalized Sato-Tate conjecture is known to be true in these cases (see, for example, \cite{Arora2016,EmoryGoodson2022, EmoryGoodson2024, FGL2016,FiteLorenzo2018,GalleseGoodsonLombardo2,GoodsonCatalan, GoodsonDegeneracy, LarioSomoza2018}). Most of these results are for nondegenerate abelian varieties since, due to work of Banaszak and Kedlaya \cite{Banaszak2015}, there are well-established techniques for computing Sato-Tate groups in these cases (see Section \ref{sec:nondegeneracy} for more details). However, progress was recently made in \cite{GalleseGoodsonLombardo2,GoodsonDegeneracy} on computing Sato-Tate groups for degenerate abelian varieties.

In this paper, we compute the Sato-Tate groups of Jacobians of the curves $C_\ell: y^\ell=x(x^\ell-1)$ in order to gain information about the limiting distributions of the coefficients of their normalized $L$-polynomials
\begin{equation}\label{eqn:normalizedLpolynomial}
    \overline L_{\mathfrak p}(C_\ell,T):=L_{\mathfrak p}(C_\ell,T/\sqrt{N(\mathfrak{p})})=T^{2g}+a_1T^{2g-1}+a_2T^{2g-2}+\cdots +a_2T^2+a_1T+1,
\end{equation}
where $g$ is the genus of the curve and $L_{\mathfrak p}(C_\ell,T)$ is the $L$-polynomial appearing in the numerator of the zeta function of the curve for primes $\mathfrak p$ of good reduction. Our work generalizes the results of \cite{LarioSomoza2018} which studies the case of the Picard curve with $\ell=3.$ The Jacobians of the curves $C_\ell$ are known to be simple and nondegenerate due to work of Aoki in \cite{Aoki2002}. Furthermore, the curves and their Jacobians have complex multiplication, and so the generalized Sato-Tate conjecture is known to be true for this family. Moreover, the field of definition of the endomorphisms of $\Jac(C_\ell)$ is $K=\Q(\zeta_{\ell^2})$, and so both $\Gal(K/\Q)$ and the component group of the Sato-Tate group are cyclic. Thus, we obtain a relatively easy-to-state result for the Sato-Tate group in Theorem \ref{theorem:STfullgroup}. On the other hand, we encounter some interesting computational challenges, described in Section \ref{sec:momentstats}, due to the high genera of the curves: $g=\ell(\ell-1)/2$, which is quite large even for small primes such as $\ell=5$ and 7.

\subsection*{Organization of the paper}
In Section \ref{sec:curveprelim} we present several important facts about the curves and their Jacobians. In Section \ref{sec:pointcount} we prove  new results for the number of points on the curves over finite fields, including a formula in terms of Jacobi sums when the field $\mathbb F_q$ satisfies $q\equiv 1 \pmod{\ell^2}$. We provide some background information on nondegeneracy in Section \ref{sec:nondegeneracy}.  Section \ref{sec:satotategroupresults} contains the main result of the paper (Theorem \ref{theorem:STfullgroup}), where we give explicit generators of the Sato-Tate group. In Section \ref{sec:momentstats} we describe our methods for computing moment statistics for the curves $C_\ell$ and their Sato-Tate groups. Throughout the paper, we work through the example $\ell=5$ to demonstrate our results.

\subsection*{Notation and conventions}

Throughout, let $\ell$ be an odd prime and $C_\ell/\Q$ denote the curve $y^\ell=x(x^\ell-1)$. We  write $\zeta_m$ for a primitive $m^{th}$ root of unity. For any rational number $x$ whose denominator is coprime to a positive integer $r$, $\langle x \rangle_r$ denotes the unique representative of $x$ modulo $r$ between 0 and $r-1$. 

Let $I$ denote the $2\times 2$ identity matrix and define the matrix
\begin{align*}
    J=\begin{pmatrix}0&1\\-1&0\end{pmatrix}.
\end{align*}

We embed $\U(1)$ in $\SU(2)$ via $u\mapsto U= \diag(u, \overline u)$. For any positive integer $n$, define the following subgroup of the unitary symplectic group $\USp(2n)$
\begin{equation}\label{eqn:U1n}
    \U(1)^n:=\left\langle \diag( U_1, U_2,\ldots, U_n)\;|\; U_i\in \U(1)\right\rangle.
\end{equation}

We denote the Sato-Tate group of an abelian variety $A/\Q$ by $\ST(A):=\ST(A_{\mathbb Q})$ with identity component denoted $\ST^0(A)$ and component group  $\ST(A)/\ST^0(A)$.

\subsection*{Acknowledgments}
The authors thank the anonymous reviewer for their careful reading of this article and for their helpful comments. This research collaboration initiated through the Tow Mentoring and Research Program at Brooklyn College, City University of New York. We are grateful for this support provided by the Tow Foundation. Our research was also supported by a grant from the National Science Foundation (DMS - 2201085).


\section{Preliminaries on the curve $C_\ell$ and its Jacobian}\label{sec:curveprelim}

The curves $C_\ell$ that we work with in this paper are a special case of those appearing in \cite{Aoki2002}:
$$y^\ell=x^a(x^{\ell^{e-1}}-1).$$
More specifically, we are considering the case where $a=1$ and $e=2$. From the results of  \cite[Corollary 2.2]{Aoki2002} and \cite{KoblitzRohrlich}, we know that the Jacobian variety $\Jac(C_\ell)$ is a simple, nondegenerate abelian variety of dimension $g={\phi(\ell^{2})}/{2}=\ell(\ell-1)/2$, where $\phi$ denotes the Euler totient function. The curve $C_\ell$ has CM by $K:=\Q(\zeta_{\ell^2})$ and the endomorphism ring of $\Jac(C_\ell)$ satisfies $\End(\Jac(C_\ell)_{\overline\Q})\otimes \Q \simeq K$.

For ease of notation, let $\zeta:=\zeta_{\ell^2}$. We define the automorphism $\alpha$ of $C_\ell$ by
\begin{equation}\label{eqn:alpha_automorphism}
    \alpha: (x,y) \mapsto (\zeta^{\ell}x,\zeta^{\ell+1}y).
\end{equation}
One can check that the order of $\alpha$ is $\ell^2$. As we will see in Section  \ref{sec:endomorphism}, the curve automorphism $\alpha$ induces an endomorphism of the Jacobian of $C_\ell$. The rational endomorphism ring of the Jacobian is $\Q(\alpha)\simeq K$.
\begin{example}
    For $\ell=5$, we have the curve $ C_{5}: y^{5} = x(x^{5}-1)$  with curve automorphism
    \begin{equation*}
        \alpha: (x,y) \mapsto (\zeta_{25}^{5}x,\zeta_{25}^{6}y),
    \end{equation*}
    whose order is 25.
\end{example}

\subsection{Endomorphisms of the Jacobian}\label{sec:endomorphism}

As noted above, the dimension of $\Jac(C_\ell)$ is $g=\ell(\ell-1)/2$. It follows from \cite[Problem IV.1.E]{miranda} that we can take the following basis for the space of regular differential 1-forms: 
    \begin{equation}\label{eqn:differential1form}
       \mathcal B=\left\{\omega_{a,b} := x^{a}\frac{dx}{y^{b}} \ \middle|\
        \begin{array}{c}
           0 \leq a \leq \ell-2,\\[.1cm] a+1 \leq b \leq \ell-1
        \end{array} \right\}.
    \end{equation}

\begin{definition}[Ordering Scheme]\label{def:orderingscheme}
    To order the list of 1-forms appearing in Equation \eqref{eqn:differential1form}, fix $a$ and vary $b$ until every $\omega_{a,b}$ has been exhausted for the fixed $a$. Then, organize the list in increasing order (from left to right) of $a$ first, then $b$.
\end{definition}

\begin{example} \label{ex:1formlist}
    For $C_5$, the 1-forms in the basis $\mathcal B$, ordered as in Definition \ref{def:orderingscheme}, are
    \begin{gather*} \omega_{0,1},\,\omega_{0,2},\,\omega_{0,3},\,\omega_{0,4},\,\omega_{1,2},\,\omega_{1,3},\,\omega_{1,4},\,\omega_{2,3},\,\omega_{2,4},\,\omega_{3,4}.
    \end{gather*}
\end{example}

The pullback of the regular 1-form  $\omega_{a,b}$ associated to the map $\alpha$ is given by
    \begin{equation}\label{eqn:alphapullback}
        \alpha^{*}\omega_{a,b} = \zeta^{\ell(a+1-b)-b}\omega_{a,b}.
    \end{equation}
We obtain an endomorphism of $\Jac(C_\ell)$ by taking the symplectic basis of $H_1(\Jac(C_\ell)_\C,\C)$ (with respect to the skew-symmetric matrix $\diag(J,J,\ldots, J)$) corresponding to the basis of regular 1-forms defined in Equation \ref{eqn:differential1form} (written according to the ordering scheme described in Definition \ref{def:orderingscheme}). Let $Z$ denote the $2\times 2$ diagonal matrix
\begin{equation}
Z=\begin{pmatrix}\zeta&0\\0&\overline\zeta\end{pmatrix}.
\end{equation}
The endomorphism $\alpha$ is a diagonal matrix partitioned into $2 \times 2$ blocks such that the $j$th entry, denoted as $\alpha[j,j]$, is
    \begin{equation}\label{eqn:alphaendomorphism}
        \alpha[j,j] = Z^{\ell(a_{j}+1-b_{j})-b_{j}},
    \end{equation}
$0 \leq j \leq g-1$. For ease of notation, let 
    \begin{equation}\label{eqn:e_exponents}
        e_{j} := \langle \ell(a_{j}+1-b_{j})-b_{j} \rangle_{\ell^2}.
    \end{equation}
Then the set of all exponents of $Z$ appearing as entries in $\alpha$ is
    \begin{equation}\label{eqn:S_exponents}
        S:= \left\{ e_{j} \ \middle|\
        \begin{array}{c}
            0 \leq a_{j} \leq \ell-2, \\[.1cm]
            a_{j}+1 \leq b_{j} \leq \ell-1 
        \end{array}
        \right\}.
    \end{equation}

\begin{remark}\label{remark:e_exponent_properties}
Simple arithmetic shows that $e_i\not=e_j$ if $i\not=j$  and that none of the $e_i$ are divisible by $\ell$. Thus, the set $S$ contains $g=\ell(\ell-1)/2$ distinct elements, i.e., $S$ contains exactly half of the elements of $(\mathbb Z/\ell^2\mathbb Z)^*$. Furthermore, $e_i+e_j\not=\ell^2$ for any $i,j$ satisfying the bounds given in the definition of the set $S$.
\end{remark}

\begin{example} \label{ex:p5endo}
We determine $\alpha[4,4]$ for $C_5$. By Example \ref{ex:1formlist}, $j=4$ corresponds to the regular 1-form $\omega_{1,2}$, giving $a_{4} = 1$ and $b_{4} = 2$. Using Equation (\ref{eqn:alphaendomorphism}), 
 \begin{equation*}
    \alpha[4,4] = Z^{5(1+1-2)-2} = Z^{-2} = Z^{23}.
\end{equation*}
Repeating this process for the nine remaining entries gives the endomorphism
\begin{equation*}
    \alpha=\diag(Z^{24}, Z^{18}, Z^{12}, Z^{6}, Z^{23}, Z^{17}, Z^{11}, Z^{22}, Z^{16}, Z^{21}).
\end{equation*}

\end{example}

We conclude this subsection by demonstrating the action of the Galois group $\Gal(K/\Q)$ on the endomorphism $\alpha,$ where $K=\mathbb Q(\zeta_{\ell^2}).$ It is well-known that this Galois group is isomorphic to the cyclic multiplicative group $(\mathbb Z/\ell^2\mathbb Z)^*$. Let $n\in(\mathbb Z/\ell^2\mathbb Z)^*$, and let $\sigma_n$ denote the element of $\Gal(K/\Q)$ mapping $\zeta\mapsto \zeta^n$. For any power $t$ of the matrix $Z$, we have that 
\begin{equation}\label{eqn:sigmaactionZ}
    \prescript{\sigma_{n}}{}{Z^t} = \begin{pmatrix}
    \zeta^{nt} & 0 \\
    0 & \overline{\zeta^{nt}}
    \end{pmatrix} = Z^{nt}.
\end{equation}

Let $0\leq i\leq g-1$ and let $e_i$ be an element of the set $S$ defined in Equation \eqref{eqn:S_exponents}. For ease of notation, let 
\begin{equation}\label{eqn:g_exponents}
    g_{i} := \langle \ell n(a_{i}+1-b_{i})-nb_{i} \rangle_{\ell^2} = \langle ne_{i} \rangle_{\ell^2}.
\end{equation}
Then $g_i$ is an element of $(\mathbb Z/\ell^2\mathbb Z)^*$, and so, as noted in Remark \ref{remark:e_exponent_properties}, we have either $g_i\in S$ or $\ell^2-g_i\in S$. Thus, the action of the Galois automorphism $\sigma_n$ on the endomorphism $\alpha$ can be described by 
    \begin{equation}\label{eqn:sigmaalpha}
        \prescript{\sigma_{n}}{}{\alpha}[i,i] = \begin{cases}
        Z^{g_{i}} & \text{if $g_{i} \in S$} \\[.1in]
        \overline{Z}^{\ell^2-g_{i}} & \text{if $g_{i} \not\in S$},
    \end{cases}
    \end{equation}
so that all exponents that we obtain as a result of this action are in the set $S$.

\begin{example}
For $C_5$, we have that $\langle \sigma_{2} \rangle = \Gal(\Q(\zeta_{25})/\Q)$. Recall from Example \ref{ex:p5endo} that $Z^{23}$ corresponds to $j=4$ and $Z^{16}$ corresponds to $j=8$. By the action defined in Equation (\ref{eqn:sigmaactionZ}),
\begin{equation*}
    g_{0} = \langle 2(e_{0}) \rangle_{25} = \langle 2(24) \rangle_{25} = 23  \quad \text{and} \quad 
    g_{5} = \langle 2(e_{5}) \rangle_{25} = \langle 2(17) \rangle_{25} = 9.
\end{equation*}
Since $23 \in S$ but $9 \not\in S$, it follows that
\begin{equation*}
    \prescript{\sigma_{2}}{}{\alpha}[0,0] = Z^{23} \quad \text{and} \quad \prescript{\sigma_{2}}{}{\alpha}[5,5] = \overline{Z}^{16}
\end{equation*}
by Equation (\ref{eqn:sigmaalpha}). Continuing this process for the eight remaining entries, we see that $\sigma_2$ acts on the endomorphism $\alpha$ as 
\begin{equation*}
    \prescript{\sigma_{2}}{}{\alpha} = \diag(Z^{23}, Z^{11}, Z^{24}, Z^{12}, Z^{21}, \overline{Z}^{16}, Z^{22}, \overline{Z}^{6}, \overline{Z}^{18}, Z^{17}).
\end{equation*}

\end{example}

\subsection{Point count}\label{sec:pointcount}

Let $q$ be a prime distinct from the prime $\ell$ and $\#C_\ell(\mathbb F_q)$ denote the number of $\mathbb F_q$-rational points on the curve $y^\ell=x(x^\ell-1)$. The first two lemmas of this section are generalizations of Propositions 1 and 2 of \cite{HolzapfelNicolae} for the Picard curves $y^3=x(x^3-1)$. 

\begin{lemma}\label{lemma:pointcount1}
    If $q\not\equiv 1 \pmod{\ell}$ then $\#C_\ell(\mathbb F_q)=q+1$.
\end{lemma}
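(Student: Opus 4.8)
The plan is to count points on the affine curve $y^\ell = x(x^\ell - 1)$ over $\mathbb{F}_q$ and then add the points at infinity. The key observation is that when $q \not\equiv 1 \pmod{\ell}$, the map $t \mapsto t^\ell$ is a bijection on $\mathbb{F}_q$ (equivalently, on $\mathbb{F}_q^*$), since $\gcd(\ell, q-1) = 1$. This means that for every value $v \in \mathbb{F}_q$ of the right-hand side $f(x) := x(x^\ell - 1)$, there is exactly one $y \in \mathbb{F}_q$ with $y^\ell = v$.

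First I would count affine points: for each $x \in \mathbb{F}_q$, the number of $y$ with $y^\ell = f(x)$ is exactly $1$ regardless of whether $f(x)$ is zero or nonzero, precisely because $t \mapsto t^\ell$ is a bijection on all of $\mathbb{F}_q$. Hence the affine curve has exactly $q$ points. Next I would analyze the points at infinity by passing to a smooth model (or examining the projective closure / normalization). Since the genus is $g = \ell(\ell-1)/2$ and the curve is known to be a nice curve, one checks that there is a single point at infinity on the smooth model: writing the equation in terms of a local parameter near infinity, the place over $x = \infty$ is totally ramified because $\gcd(\ell, \ell+1) = 1$ (the degrees $\deg f = \ell+1$ against the exponent $\ell$ on $y$). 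This contributes exactly one additional point. Adding up gives $\#C_\ell(\mathbb{F}_q) = q + 1$.

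The main obstacle — really the only subtle point — is justifying the behavior at infinity rigorously, i.e., confirming that the smooth projective model has exactly one $\mathbb{F}_q$-rational point above $x = \infty$ and that no extra care is needed for singular affine points when $f(x) = 0$. The affine count is immediate from the bijectivity of $\ell$-th powering; the infinity analysis should follow from a Newton polygon / Puiseux argument or by directly citing the structure of superelliptic curves $y^n = f(x)$ with $\gcd(n, \deg f) = 1$, which have a unique totally ramified place at infinity. I would handle this either by an explicit change of coordinates at infinity or by referencing the standard description of the smooth model of such curves.
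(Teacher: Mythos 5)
Your proposal is correct and follows essentially the same route as the paper: observe that $\gcd(\ell, q-1)=1$ makes $\ell$-th powering a bijection on $\mathbb{F}_q$, so each $x$ gives exactly one $y$ for a total of $q$ affine points, then add the unique point at infinity. The only difference is cosmetic — you supply a brief justification (via $\gcd(\ell,\ell+1)=1$ and the standard superelliptic ramification picture) for the single point at infinity, which the paper simply asserts.
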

\begin{proof}
If $q\not\equiv 1 \pmod{\ell}$, then the order of the multiplicative group $\mathbb F_q^*$ is not divisible by $\ell$. Hence, the subgroup $\mathbb F_q^{*\ell}$ of $\ell^{th}$ powers in $\mathbb F_q^*$ must be the full group. Thus, for each value of $x$ in $\mathbb F_q^*$, there is exactly one value of $y$ satisfying $y^\ell=x(x^\ell-1)$. Combining this set of solutions with the solution $(x,y)=(0,0)$, we get that there are $q$  solutions $(x,y)$ in $\mathbb F_q\times\mathbb F_q$. There is a single point at infinity, and so we obtain $\#C_\ell(\mathbb F_q)=q+1$ as desired.
\end{proof}

\begin{lemma}\label{lemma:pointcount2}
    If $q\equiv 1 \pmod{\ell}$ but $q\not\equiv 1 \pmod{\ell^2}$ then $\#C_\ell(\mathbb F_q)=q+1$.
\end{lemma}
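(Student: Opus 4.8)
\emph{Proof proposal.} The plan is to express $\#C_\ell(\F_q)$ through multiplicative character sums over $\F_q$ and to show that every nontrivial term vanishes, the vanishing being forced precisely by the hypothesis $q\not\equiv 1\pmod{\ell^2}$; this is the analogue, for general $\ell$, of Proposition~2 of \cite{HolzapfelNicolae}.

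\textbf{Step 1 (character-sum expansion).} Since $q\equiv 1\pmod\ell$, fix a multiplicative character $\chi$ of $\F_q^*$ of exact order $\ell$ and extend each power $\chi^j$ to $\F_q$ by $\chi^j(0)=0$. For $c\in\F_q^*$ the number of $y\in\F_q$ with $y^\ell=c$ equals $\sum_{j=0}^{\ell-1}\chi^j(c)$ (namely $\ell$ if $c$ is an $\ell$th power and $0$ otherwise), whereas for $c=0$ there is the single solution $y=0$. The values $x\in\F_q$ with $x(x^\ell-1)=0$ are $x=0$ together with the $\ell$ elements of the group $\mu_\ell\subset\F_q^*$ of $\ell$th roots of unity, and each contributes the single point $(x,0)$. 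Counting the remaining affine points via the character formula, isolating the $j=0$ term (which contributes $q-1-\ell$), reinstating the terms that vanish at $x(x^\ell-1)=0$, and adding the unique point at infinity (as in the proof of Lemma~\ref{lemma:pointcount1}), one obtains
\begin{equation*}
    \#C_\ell(\F_q)=q+1+\sum_{j=1}^{\ell-1}S_j,\qquad S_j:=\sum_{x\in\F_q}\chi^j\bigl(x(x^\ell-1)\bigr).
\end{equation*}
Hence it suffices to prove that $S_j=0$ for every $1\le j\le\ell-1$.

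\textbf{Step 2 (fibering over the $\ell$th power map).} Only $x\in\F_q^*$ contributes to $S_j$, and there $\chi^j(x^\ell-1)$ depends only on $w:=x^\ell$. The map $x\mapsto x^\ell$ on $\F_q^*$ has kernel $\mu_\ell$, hence is $\ell$-to-one onto the subgroup $(\F_q^*)^\ell$, so
\begin{equation*}
    S_j=\sum_{w\in(\F_q^*)^\ell}\chi^j(w-1)\sum_{\substack{x\in\F_q^*\\ x^\ell=w}}\chi^j(x).
\end{equation*}
Fixing one $\ell$th root $x_w$ of $w$, the fiber is $\{x_w\zeta:\zeta\in\mu_\ell\}$, so by multiplicativity the inner sum equals $\chi^j(x_w)\sum_{\zeta\in\mu_\ell}\chi^j(\zeta)$.

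\textbf{Step 3 (the key vanishing).} It remains to show $\sum_{\zeta\in\mu_\ell}\chi^j(\zeta)=0$, i.e.\ that $\chi^j$ restricts nontrivially to $\mu_\ell$. As $\ell$ is prime and $1\le j\le\ell-1$, the character $\chi^j$ still has exact order $\ell$, so $\ker\chi^j=(\F_q^*)^\ell$, a subgroup of order $(q-1)/\ell$. Since $\mu_\ell$ is the unique subgroup of the cyclic group $\F_q^*$ of order $\ell$, the inclusion $\mu_\ell\subseteq(\F_q^*)^\ell$ would force $\ell\mid(q-1)/\ell$, that is $q\equiv 1\pmod{\ell^2}$, contrary to hypothesis. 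Therefore $\chi^j|_{\mu_\ell}$ is a nontrivial character of the finite group $\mu_\ell$, so $\sum_{\zeta\in\mu_\ell}\chi^j(\zeta)=0$; consequently every inner sum in Step~2 vanishes, $S_j=0$, and $\#C_\ell(\F_q)=q+1$.

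\textbf{Expected main obstacle.} The computations are routine; the real content is the group-theoretic Step~3, which is exactly where the hypothesis $q\not\equiv 1\pmod{\ell^2}$ enters and the reason this range is genuinely different from the $q\equiv 1\pmod{\ell^2}$ case treated later (where these sums become nonzero Jacobi sums). The only other point needing care is the bookkeeping in Step~1 — properly accounting for the $1+\ell$ affine points over $x(x^\ell-1)=0$ and the single point at infinity, and reinstating terms that vanish under the convention $\chi^j(0)=0$ so that each $S_j$ is a clean sum over all of $\F_q$.
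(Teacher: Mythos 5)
Your proof is correct and rests on the same key idea as the paper's: after the character-sum expansion, each nontrivial term $S_j$ factors so that it vanishes because $\chi^j$ restricts nontrivially to the subgroup $\mu_\ell$, and this is exactly where the hypothesis $q\not\equiv 1\pmod{\ell^2}$ enters. The paper arrives at the same vanishing factor $\sum_{\zeta\in\mu_\ell}\chi^j(\zeta)=0$ by writing $\F_q^*$ as the internal direct product $\mu_\ell\times U_{(q-1)/\ell}$ (a decomposition that itself requires the hypothesis) rather than by fibering over the $\ell$th-power map as you do; this is a minor organizational difference, not a different route.
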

\begin{proof}
We closely follow the proof of Proposition 2 of \cite{HolzapfelNicolae}. Let $\chi$ be a character of $\mathbb F_q^*$ of order $\ell$. Note that, since $\ell$ is prime, all of the characters of order $\ell$ for $\mathbb F_q^*$ are of the form $\chi^j$, where $j\in\{1, \ldots, \ell-1\}$. We extend each $\chi^j$ to all of $\mathbb F_q$ by setting $\chi^j(0)=0$. Let $\varepsilon$ denote the trivial multiplicative character for $\mathbb F_q^*$ and set $\varepsilon(0)=1$.

It is well-known that the number of solutions to an equation $z^n=a$ is given by
$$N(z^n=a)=\sum_\lambda \lambda(a),$$
where the sum is taken over all characters whose order divides $n$ (see, for example, \cite[Proposition 8.1.5]{IrelandRosen}). Thus,  we can compute the number of solutions to the equation $y^\ell=x(x^\ell-1)$ by computing
\begin{align*}
    N&=\sum_{c\in\mathbb F_q} N(y^\ell=c(c^\ell-1))\\
    &= \sum_{c\in\mathbb F_q} \left(\varepsilon(c(c^\ell-1)) + \sum_{j=1}^{\ell-1} \chi^j(c(c^\ell-1))\right).
\end{align*}
Note that $\varepsilon(c(c^\ell-1))=1$ for all $c\in\mathbb F_q$, and so $\sum_{c\in\mathbb F_q} \varepsilon(c(c^\ell-1))=q$. We will show that $\sum_{c\in\mathbb F_q} \sum_{j=1}^{\ell-1} \chi^j(c(c^\ell-1))=0.$

Since $\ell|(q-1)$ but $\ell^2\nmid (q-1)$, the multiplicative group $\mathbb F_q^*$ equals the internal direct product of its unique subgroup of order $\ell$, which we denote by $\langle g\rangle$, and of its subgroup of order $(q-1)/\ell$, denoted $U_{(q-1)/\ell}$. Thus, every element of $c\in\mathbb F_q^*$ can be uniquely written in the form 
$c=dg^i,$
where $d\in U_{(q-1)/\ell}$ and $i\in\{0, 1, \ldots, \ell-1\}$. Thus, we obtain
\begin{align*}
    \sum_{c\in\mathbb F_q^*} \sum_{j=1}^{\ell-1} \chi^j(c(c^\ell-1))&= \sum_{j=1}^{\ell-1} \sum_{i=1}^{\ell-1} \sum_{d\in U_{(q-1)/\ell}} \chi^j(dg^i((dg^i)^\ell-1))\\
    &=\sum_{j=1}^{\ell-1} \sum_{i=1}^{\ell-1} \sum_{d\in U_{(q-1)/\ell}} \chi^j(g^i)\chi^j(d^{\ell+1}g^{i\ell}-d)\\
    &=\sum_{j=1}^{\ell-1} \sum_{i=1}^{\ell-1} \sum_{d\in U_{(q-1)/\ell}} \chi^j(g^i)\chi^j(d^{\ell+1}-d),
\end{align*}
where the last equality holds since $g$ is an element of order $\ell$, and so $g^{i\ell}=1$. Thus, we can split up the summand as 
\begin{align}\label{eqn:charactersum}
    \sum_{j=1}^{\ell-1} \sum_{i=1}^{\ell-1} \sum_{d\in U_{(q-1)/\ell}} \chi^j(g^i)\chi^j(d^{\ell+1}-d)&=\sum_{j=1}^{\ell-1} \sum_{d\in U_{(q-1)/\ell}} \left(\chi^j(d^{\ell+1}-d) \sum_{i=1}^{\ell-1} \chi^j(g^i)\right).
\end{align}
Recall that, for each $j$, $\chi^j$ is a character of order $\ell$ and so it is nontrivial on the subgroup $\langle g\rangle$ of order $\ell$. Thus, by Proposition 8.1.2 of \cite{IrelandRosen}, $\sum_{i=1}^{\ell-1} \chi^j(g^i)=0$ for each $j$. This implies that the entire expression on the right-hand-side of Equation \eqref{eqn:charactersum} equals 0. Futhermore, when $c=0$ we have $\chi^j(c(c^\ell-1))=\chi^j(0)=0$, for all $1\leq j\leq \ell-1$.  Hence,  $$\sum_{c\in\mathbb F_q} \sum_{j=1}^{\ell-1} \chi^j(c(c^\ell-1))=0,$$ and so $N=q$. This yields the desired result since $\#C_\ell(\mathbb F_q)=N+1=q+1$.
\end{proof}

The remaining case is $q\equiv 1 \pmod{\ell^2}$, which we handle following the techniques used in \cite[Appendix]{LarioSomoza2018} for the Picard curve. Let $\zeta$ be a primitive $\ell^2$ root of unity, and let $\chi_q$ denote the unique character $\chi_q: \mathbb F_q^* \to \mathbb Q(\zeta)^*$ satisfying 
$$\chi_q(x)=x^{(q-1)/\ell^2} \pmod{q}.$$
For ease of notation, we let $\chi:=\chi_q$ for the remainder of this section. For $a,b\in\mathbb Z/\ell^2\mathbb Z$, let $J_q{(a,b)}$ denote the Jacobi sum
\begin{equation}\label{eqn:JacobiSum}
    J_q(a,b):=J_q(\chi^a, \chi^b)=\sum_{x\in\mathbb F_q} \chi^a(x)\chi^b(1-x)
\end{equation}
and define
\begin{equation}\label{eqn:JacobiTrace}
    \Tr_{\mathbb Q(\zeta)/\mathbb Q}(J_q(a,b))=\sum_{s\in(\mathbb Z/\ell^2\mathbb Z)^*} J_q(sa,sb).
\end{equation}

\begin{lemma}\label{lemma:pointcount3}
       If $q\equiv 1 \pmod{\ell^2}$ then $\#C_\ell(\mathbb F_q)=q+1+\Tr_{\mathbb Q(\zeta)/\mathbb Q}(J_q(\ell(\ell-1),1))$. 
\end{lemma}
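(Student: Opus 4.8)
The plan is to count points on $C_\ell$ over $\mathbb F_q$ when $q \equiv 1 \pmod{\ell^2}$ by again invoking the formula $N(z^n = a) = \sum_{\lambda} \lambda(a)$ from \cite[Proposition 8.1.5]{IrelandRosen}, but now the characters of order dividing $\ell$ no longer vanish in aggregate, so the Jacobi sums survive. Writing $\chi := \chi_q$ for the fixed character of order $\ell^2$, the characters of order dividing $\ell$ on $\mathbb F_q^*$ are exactly $\chi^{\ell j}$ for $j = 0, 1, \dots, \ell-1$ (extended by $0$ at the origin, with the $j=0$ term being the trivial character $\varepsilon$ with $\varepsilon(0)=1$). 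First I would write
\begin{align*}
\#C_\ell(\mathbb F_q) &= 1 + \sum_{c \in \mathbb F_q} N\!\left(y^\ell = c(c^\ell-1)\right) = 1 + \sum_{c \in \mathbb F_q} \sum_{j=0}^{\ell-1} \chi^{\ell j}\!\left(c(c^\ell-1)\right) \\
&= q + 1 + \sum_{j=1}^{\ell-1} \sum_{c \in \mathbb F_q} \chi^{\ell j}(c)\,\chi^{\ell j}(c^\ell - 1),
\end{align*}
having separated off the $j=0$ contribution, which gives $q$, and used multiplicativity (legitimate once one checks the $c=0$ and $c^\ell = 1$ terms contribute $0$ to the inner sums for $j \neq 0$, since $\chi^{\ell j}$ is nontrivial).

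Next I would manipulate the inner sum $\sum_{c} \chi^{\ell j}(c)\chi^{\ell j}(c^\ell-1)$ into a Jacobi sum. The natural move is the substitution $u = c^\ell$: since $q \equiv 1 \pmod{\ell^2}$, the $\ell$-th power map on $\mathbb F_q^*$ is $\ell$-to-$1$ onto $\mathbb F_q^{*\ell}$, and the number of $c$ with $c^\ell = u$ is $\sum_{k=0}^{\ell-1} \chi^{\ell k}(u)$ again by the same counting principle. This converts the sum over $c$ into a sum over $u \in \mathbb F_q$ weighted by $\chi^{\ell j}$ evaluated at a fractional power of $u$; the cleaner bookkeeping is to note $\chi^{\ell j}(c) = \chi^j(c^\ell) = \chi^j(u)$, so that
\[
\sum_{c \in \mathbb F_q} \chi^{\ell j}(c)\,\chi^{\ell j}(c^\ell-1) = \sum_{u \in \mathbb F_q} \left(\sum_{k=0}^{\ell-1}\chi^{\ell k}(u)\right)\chi^{j}(u)\,\chi^{\ell j}(u-1).
\]
Expanding and using $\chi^{\ell k}\chi^j = \chi^{\ell k + j}$ together with $\chi^{\ell j}(u-1) = \chi^{\ell j}(-1)\chi^{\ell j}(1-u)$, each term becomes, up to the sign $\chi^{\ell j}(-1)$, a Jacobi sum $J_q(\ell k + j, \ell j)$. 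I would then argue that the sign is $+1$: $\chi^{\ell j}(-1) = ((-1)^{(q-1)/\ell})^{j}$, and $\ell \mid (q-1)$ with $(q-1)/\ell$ even (as $\ell^2 \mid q-1$ and $\ell$ odd forces $8 \nmid$ is irrelevant — rather, $(-1)^{(q-1)/\ell}=1$ because $(q-1)/\ell$ is a multiple of $\ell$, hence... ) — more carefully, $\chi^{\ell j}$ has order dividing $\ell$, which is odd, so $\chi^{\ell j}(-1)$ is an $\ell$-th root of unity that is also $\pm 1$, hence equals $1$.

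Finally I would collect the double sum $\sum_{j=1}^{\ell-1}\sum_{k=0}^{\ell-1} J_q(\ell k + j, \ell j)$ and recognize it as a trace. Setting $s = \ell k + j$, as $j$ ranges over $1,\dots,\ell-1$ and $k$ over $0,\dots,\ell-1$ the pair $(\ell k + j \bmod \ell^2,\ \ell j \bmod \ell^2)$ should be matched against $(sa, sb)$ with $(a,b) = (\ell(\ell-1), 1)$: indeed $s \cdot 1 = s = \ell k + j$ and $s \cdot \ell(\ell-1) \equiv \ell j (\ell-1) \equiv -\ell j \equiv \ell(\ell^2 - j) \equiv \ell j' \pmod{\ell^2}$ after re-indexing, so that as $s$ runs over $(\mathbb Z/\ell^2\mathbb Z)^*$ — which it does bijectively, since $s = \ell k + j$ is a unit iff $\ell \nmid j$ iff $j \in \{1,\dots,\ell-1\}$, and each unit is hit once — we recover exactly $\Tr_{\mathbb Q(\zeta)/\mathbb Q}(J_q(\ell(\ell-1),1)) = \sum_{s \in (\mathbb Z/\ell^2\mathbb Z)^*} J_q(s\ell(\ell-1), s)$. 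The main obstacle is precisely this last index-matching: one must pin down which Jacobi-sum arguments arise, verify the parametrization $s \mapsto (\ell k+j, \ell j)$ is a bijection onto the index set of the trace, and handle the $\chi(-1)$ signs and the degenerate terms ($c=0$, $c^\ell=1$, $u=0$, $u=1$) carefully — this is where following \cite[Appendix]{LarioSomoza2018} closely, but tracking the $\ell$ versus $\ell^2$ and the extra $x^\ell$ factor, requires care.
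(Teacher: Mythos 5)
Your approach is genuinely different from the paper's: the paper first passes through an explicit isomorphism $\phi\colon C_\ell \to C_\ell'$ with $C_\ell'\colon v^{\ell^2}=u(u+1)^{\ell(\ell-1)}$, so that the left-hand side has degree $\ell^2$ and the character-sum counting of \cite[\textsection 8.3, Thm.\ 1]{IrelandRosen} runs directly over $a\in\mathbb Z/\ell^2\mathbb Z$; restricting to units $a$ then produces the Jacobi sums $J_q(a\ell(\ell-1),a)$ in exactly the form needed for the trace, with no further index gymnastics. You instead stay on $C_\ell$, count with characters of order dividing $\ell$, and then promote to $\ell^2$ via the substitution $u=c^\ell$ and the $\ell$-to-$1$ counting identity $\#\{c: c^\ell=u\}=\sum_{k=0}^{\ell-1}\chi^{\ell k}(u)$. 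That part of your argument is sound, and it correctly yields
\[
\#C_\ell(\mathbb F_q)=q+1+\sum_{j=1}^{\ell-1}\sum_{k=0}^{\ell-1}J_q(\ell k+j,\ \ell j).
\]

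The genuine gap is the final identification with $\Tr_{\Q(\zeta)/\Q}(J_q(\ell(\ell-1),1))=\sum_{s\in(\mathbb Z/\ell^2\mathbb Z)^*}J_q(s\ell(\ell-1),s)$. Your proposed matching via $s=\ell k+j$ fails as a bijection of index pairs: under that parametrization the trace term is $J_q(-\ell j,\ \ell k+j)=J_q(\ell k+j,\ -\ell j)$, whereas your double sum has $J_q(\ell k+j,\ \ell j)$, and $\ell j\not\equiv -\ell j\pmod{\ell^2}$. The two second arguments even lie in different residue classes mod $\ell$ ($+j$ versus $-j$), so no relabeling of $(j,k)$ can reconcile them, and the sentence asserting a bijection onto the index set of the trace is incorrect. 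What is actually needed is the Jacobi-sum identity $J_q(a,b)=\chi^b(-1)\,J_q(-a-b,\,b)$, valid when $\chi^a,\chi^b,\chi^{a+b}$ are all nontrivial (the sign is $1$ here since $\chi$ has odd order $\ell^2$); applying $J_q(\ell k+j,\ell j)=J_q(-\ell(k+j)-j,\ \ell j)$ and then reparametrizing does produce exactly the trace. You flag that the matching ``requires care,'' but the specific mechanism you propose does not work, and the needed identity is nowhere invoked, so as written the proof does not close. The paper's detour through $C_\ell'$ is precisely what buys it a clean bijection and lets it avoid this identity altogether.

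Two minor additional comments: the parenthetical argument for $\chi^{\ell j}(-1)=1$ meanders before landing on the correct reason (odd order); and you should note explicitly that $\chi^{\ell k+j}$, $\chi^{\ell j}$, and $\chi^{\ell(k+j)+j}$ are all nontrivial (each is nonzero mod $\ell$ or mod $\ell^2$ as appropriate), which is what legitimizes both the multiplicativity step and the Jacobi-sum identity above.
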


\begin{proof}
Let $C_\ell': v^{\ell^2}=u(u+1)^{\ell(\ell-1)}.$ There is an isomorphism $\phi: C_\ell \to C_\ell'$ given by  
$$\phi(x,y)=\left(-\frac{1}{x^\ell}, -\frac{y^{\ell-1}}{x^\ell}\right)$$
with inverse $\phi^{-1}: C_\ell' \to C_\ell$ given by 
$$\phi^{-1}(u,v)=\left(-\frac{(u+1)^{\ell-1}}{v^\ell},-\frac{(u+1)^\ell}{v^{\ell+1}}\right).$$
We can easily determine the number of points on $C_\ell'$ using the same techniques as in the proof of Lemma \ref{lemma:pointcount2}. First note that $\sum_{c\in\mathbb F_q}\chi^0(c)\chi^{0}(c+1)=q$. For all $0<b<\ell$, 
\begin{equation}\label{eqn:chisum}
    \sum_{c\in\mathbb F_q} \chi^{b\ell}(c)\chi^{b\ell\cdot \ell(\ell-1)}(c+1)=\sum_{c\in\mathbb F_q} \chi^{b\ell}(c)=0,
\end{equation} 
where the first equality in \eqref{eqn:chisum} holds since $\chi$ is a character of order $\ell^2$ and the second equality holds since $\chi$ is a nontrivial multiplicative character (see \cite[\textsection 8.1]{IrelandRosen}).
Hence,
\begin{align*}
    \#C_\ell'(\mathbb F_q)&=\sum_{c\in\mathbb F_q} N\left(v^{\ell^2}=c(c+1)^{\ell(\ell-1)}\right)\\
    &= \sum_{c\in\mathbb F_q} \left(\varepsilon(c(c+1)^{\ell(\ell-1)}) + \sum_{j=1}^{\ell^2-1} \chi^j(c(c+1)^{\ell(\ell-1)})\right)\\
    &= 1+q+\sum_{c\in\mathbb F_q}\sum_{a\in(\Z/\ell^2\Z)^*} \chi^a(c)\chi^{a\ell(\ell-1)}(c+1)\\
    &= q+1+\sum_{a\in(\Z/\ell^2\Z)^*}\sum_{d\in\mathbb F_q} \chi^a(d-1)\chi^{a\ell(\ell-1)}(d),
\end{align*}
where the third equality holds by \cite[\textsection 8.3, Thm. 1]{IrelandRosen} and the last equality holds by letting $d=c+1$. Since the order of $\chi$ is odd, we have $\chi^a(-1)=1$ for all $a$. Hence,
\begin{align*}
    \#C_\ell'(\mathbb F_q)&=q+1+\sum_{a\in(\Z/\ell^2\Z)^*}\sum_{d\in\mathbb F_q} \chi^a(-1)\chi^{a\ell(\ell-1)}(d)\chi^a(1-d)\\
    &=q+1+\sum_{a\in(\Z/\ell^2\Z)^*}\sum_{d\in\mathbb F_q}\chi^{a\ell(\ell-1)}(d)\chi^a(1-d)\\
    &=q+1+\sum_{a\in(\Z/\ell^2\Z)^*}J_q(a\ell(\ell-1),a)\\
    &=q+1+\Tr_{\mathbb Q(\zeta)/\mathbb Q}(J_q(\ell(\ell-1),1)),
\end{align*}
where the last equality holds by Equation \eqref{eqn:JacobiTrace}. Since $C_\ell'(\mathbb F_q)$ is isomorphic to $C_\ell(\mathbb F_q),$ this proves the desired result.
\end{proof}

These results will be referenced in Section \ref{sec:tableshistograms}.

\section{The Sato-Tate group of $\Jac(C_\ell)$}\label{sec:SatoTateGroup}

In this section we give explicit generators for the Sato-Tate group of the Jacobian of the curve $C_\ell$ for any prime $\ell\geq 3$. Our proof techniques rely on the nondegeneracy of the Jacobians.

\subsection{Preliminaries on nondegeneracy}\label{sec:nondegeneracy}

We first recall some key facts about nondegeneracy that we depend on in the proofs of Proposition \ref{prop:STidcomponent} and Theorem \ref{theorem:STfullgroup}.

Aoki proved in \cite[Corollary 2.2]{Aoki2002} that the Jacobians of the curves $C_\ell$ are stably nondegenerate in the sense of Hazama \cite{Hazama89}, i.e., the Hodge ring of the Jacobian does not contain exceptional cycles and the Hodge group is maximal. By work of Banaszak and Kedlaya (see  \cite[Theorem 6.6]{Banaszak2015}), this implies that we can describe the component group of the Sato-Tate group via the \emph{twisted Lefschetz group}. For an abelian variety $A/F$ of dimension $g$ defined over a number field $F$, the twisted Lefschetz  group, denoted $\TL(A)$, is a closed algebraic subgroup of  $\Sp_{2g}$ defined by
\begin{align*}
\TL(A):=\bigcup_{\tau \in \Gal(\overline{F}/F)} \LL(A)(\tau),
\end{align*}
where $\LL(A)(\tau):=\{\gamma \in \Sp_{2g}\mid \gamma \alpha \gamma^{-1}=\tau(\alpha) \text{ for all }\alpha \in \End(A_{\overline{F}})\otimes \mathbb{Q}\}$. We then have the following result which connects the twisted Lefschetz group to the algebraic Sato-Tate group.

\begin{proposition}\cite{Banaszak2015}\label{prop:AST=TL}
The algebraic Sato-Tate Conjecture holds for nondegenerate abelian varieties $A$ with $\AST(A)=\TL(A)$.
\end{proposition}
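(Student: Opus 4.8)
The plan is to build the candidate group directly as $\TL(A)$ and then verify it satisfies the axioms of the algebraic Sato-Tate conjecture by checking its identity component and its component group separately, matching each against the $\ell$-adic picture. Recall that $\AST(A)$ is required to be a reductive $\Q$-subgroup of $\Sp_{2g}$ whose base change to $\Q_\ell$ recovers the symplectic part of the $\ell$-adic algebraic monodromy group $G_\ell$ (the Zariski closure of the image of $\Gal(\overline F/F)$ acting on the rational Tate module $V_\ell(A)$) for every prime $\ell$, and whose maximal compact subgroup is $\ST(A)$. First I would recall that $G_\ell$ is automatically contained in the $\Q_\ell$-points of $\TL(A)$: Galois acts on $V_\ell(A)$ compatibly with its twisted action on $\End(A_{\overline F})\otimes\Q$, so every Frobenius lands in some coset $\LL(A)(\tau)\otimes\Q_\ell$. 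The role of nondegeneracy (Aoki's result via Hazama's criterion) is precisely that the Hodge ring carries no exceptional cycles, so that the Hodge group equals the plain Lefschetz group $\LL(A)(\mathrm{id})=:\LL(A)$; together with the Mumford--Tate conjecture in this setting this forces $G_\ell^0=\LL(A)\otimes\Q_\ell$, hence the identity component of the sought-for group must be $\LL(A)$.

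Next I would treat the component group. Let $L/F$ be the minimal extension over which all of $\End(A_{\overline F})\otimes\Q$ is defined; the Galois action on endomorphisms factors through $\Gal(L/F)$, and for each $\tau\in\Gal(L/F)$ one must check that the twisted coset $\LL(A)(\tau)=\{\gamma\in\Sp_{2g}\mid \gamma\alpha\gamma^{-1}=\tau(\alpha)\ \forall\alpha\}$ is nonempty. Here I would use that $\tau$ acts on $A_{\overline F}$ compatibly with a polarization, so pulling back the polarization under $\tau$ produces an element of $\Sp_{2g}$ (after normalizing away a similitude scalar) that conjugates $\alpha\mapsto\tau(\alpha)$; therefore $\TL(A)=\bigcup_{\tau\in\Gal(L/F)}\LL(A)(\tau)$ surjects onto $\Gal(L/F)$ with kernel $\LL(A)$, so $\pi_0(\TL(A))\cong\Gal(L/F)$. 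On the $\ell$-adic side, the image of $\Gal(\overline F/F)$ in $G_\ell/G_\ell^0$ is identified with the same $\Gal(L/F)$ by the N\'eron--Ogg--Shafarevich criterion applied to the endomorphisms, so the component group of $\AST(A)$ must also be $\Gal(L/F)$, matching $\TL(A)$ coset by coset.

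Finally I would glue the two descriptions: both $\AST(A)$ and $\TL(A)$ are closed subgroups of $\Sp_{2g}$ with identity component $\LL(A)$, with component group canonically $\Gal(L/F)$, and with the $\tau$-component consisting exactly of the symplectic elements realizing the Galois twist on $\End(A_{\overline F})\otimes\Q$ — which is the definition of $\LL(A)(\tau)$. Hence the two subgroups have the same $\overline\Q$-points in every component, giving $\AST(A)=\TL(A)$, and passing to a maximal compact subgroup recovers $\ST(A)$, so all axioms of the algebraic Sato-Tate conjecture hold with this group.

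The main obstacle is the identity-component step, namely proving that the $\ell$-adic monodromy is as large as the Lefschetz group, i.e. the relevant case of the Mumford--Tate conjecture. This is exactly what Hazama's nondegeneracy hypothesis is designed to make accessible in general; for CM abelian varieties — the case of the family $C_\ell$ studied here — it is classical, following from the theory of CM types and the Shimura--Taniyama formalism, so in the present application it may be invoked directly. A secondary subtlety is the nonemptiness of each coset $\LL(A)(\tau)$, which genuinely requires the polarization-compatibility of the Galois action and not merely the abstract existence of twisting isogenies.
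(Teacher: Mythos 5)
First, a structural note: the paper does not prove this proposition. It is stated with a citation to Banaszak--Kedlaya \cite{Banaszak2015} and used as a black box throughout (in Proposition~\ref{prop:STidcomponent} and Theorem~\ref{theorem:STfullgroup}), so there is no ``paper's own proof'' to compare against. Your sketch is therefore best read as a reconstruction of the cited argument rather than as something the paper establishes.

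Your high-level decomposition --- identify the identity component of $\AST(A)$ with the plain Lefschetz group $\LL(A)$ using nondegeneracy, then show that the twisted cosets $\LL(A)(\tau)$ account exactly for the components indexed by $\Gal(L/F)$, then glue --- does match the skeleton of the Banaszak--Kedlaya construction. But two steps carry more weight than your write-up acknowledges, and as stated they are assertions rather than proofs. First, the equality $G_\ell^0 = \LL(A)\otimes\Q_\ell$ is exactly the Mumford--Tate conjecture in this setting; Banaszak--Kedlaya's theorem for general nondegenerate $A$ is conditional on it, and it is unconditional in the present paper only because $\operatorname{Jac}(C_\ell)$ has CM. Your phrase ``together with the Mumford--Tate conjecture in this setting'' correctly flags this, but the proposition as stated needs either to carry MT as a hypothesis or to restrict to the CM case, and your proof should say which. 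Second, the nonemptiness of each coset $\LL(A)(\tau)$ is the genuine technical content of the twisted Lefschetz group: from a Galois-compatible polarization you get an element of $\operatorname{GSp}_{2g}$ realizing the twist, and one must justify that rescaling away the similitude factor can be done over $\Q$ (not merely over $\overline\Q$ or over each $\Q_\ell$ separately) while still conjugating \emph{every} $\alpha\in\End(A_{\overline F})\otimes\Q$ to $\tau(\alpha)$. Your sketch gestures at this but does not close it; without it, $\TL(A)\to\Gal(L/F)$ is not obviously surjective, and the component-group match would fail.
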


When the algebraic Sato-Tate conjecture holds, the Sato-Tate group of $A$ is a maximal compact Lie subgroup of $\AST(A)\otimes_\mathbb{Q} \mathbb{C}$ contained in $\USp(2g)$.

\subsection{Sato-Tate group results}\label{sec:satotategroupresults}

We begin with a result for the identity component of the Sato-Tate group.

\begin{proposition}\label{prop:STidcomponent} The identity component of the Sato-Tate group of $\Jac(C_\ell)$ is 
$$\ST^0(\Jac(C_\ell))\simeq \U(1)^{g}.$$
where $g=\ell(\ell-1)/2$ is the genus of $C_\ell$.
\end{proposition}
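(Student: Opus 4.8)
The plan is to compute the identity component $\ST^0(\Jac(C_\ell))$ using the nondegeneracy of the Jacobian, which (via Proposition \ref{prop:AST=TL}) tells us that $\ST^0$ is a maximal compact subgroup of the connected component of the identity of $\LL(\Jac(C_\ell))(\mathrm{id})\otimes_{\Q}\C$, i.e.\ the Lefschetz group attached to the CM field $K = \Q(\zeta_{\ell^2})$. Concretely, $\LL(\Jac(C_\ell))(\mathrm{id})$ is the centralizer in $\Sp_{2g}$ of $\End(\Jac(C_\ell)_{\overline\Q})\otimes\Q \simeq K$ acting on $H_1$, so the first step is to identify this centralizer explicitly in the symplectic basis from Section \ref{sec:endomorphism}.

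First I would recall that in the chosen symplectic basis of $H_1(\Jac(C_\ell)_\C,\C)$ (with respect to $\diag(J,\dots,J)$), the generator $\alpha$ of the CM action is the block-diagonal matrix $\diag(Z^{e_0},\dots,Z^{e_{g-1}})$ of Equation \eqref{eqn:alphaendomorphism}, where by Remark \ref{remark:e_exponent_properties} the exponents $e_0,\dots,e_{g-1}$ are pairwise distinct elements of $(\Z/\ell^2\Z)^*$ with $e_i + e_j \neq \ell^2$ for all $i,j$. The key point is that since the $e_j$ are distinct, the $2\times 2$ blocks $Z^{e_j}$ have pairwise disjoint eigenvalue sets $\{\zeta^{e_j}, \overline{\zeta^{e_j}}\}$ — here the condition $e_i+e_j\neq\ell^2$ guarantees $\zeta^{e_i}\neq\overline{\zeta^{e_j}}$ as well, so in fact all $2g$ eigenvalues $\zeta^{\pm e_j}$ are distinct. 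Therefore any matrix commuting with $\alpha$ must be block-diagonal with $2\times 2$ blocks, and within each block it must commute with $Z^{e_j} = \diag(\zeta^{e_j},\overline{\zeta^{e_j}})$, hence be diagonal. Intersecting with $\Sp_{2g}$ forces each diagonal block $\diag(\lambda_j,\mu_j)$ to satisfy $\lambda_j\mu_j = 1$ (the symplectic condition on a single $J$-block), so $\LL(\Jac(C_\ell))(\mathrm{id})$ is the torus $\{\diag(\lambda_1,\lambda_1^{-1},\dots,\lambda_g,\lambda_g^{-1})\} \cong \mathbb{G}_m^g$, which is connected. Its maximal compact subgroup inside $\USp(2g)$ is exactly $\U(1)^g$ as defined in Equation \eqref{eqn:U1n}.

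It remains to check that $\ST^0$ really is this full torus and not something smaller — equivalently, that $\ST^0 = \LL(\mathrm{id})^0 \otimes \C \cap \USp(2g)$ with no drop in dimension. This is where nondegeneracy does the real work: Aoki's result \cite[Corollary 2.2]{Aoki2002} says the Hodge group is maximal (equal to the full Lefschetz group) and there are no exceptional cycles, so by Banaszak–Kedlaya (Proposition \ref{prop:AST=TL}) the algebraic Sato-Tate group equals the twisted Lefschetz group and its identity component equals $\LL(\mathrm{id})^0$. Passing to a maximal compact subgroup of $\LL(\mathrm{id})^0\otimes_\Q\C$ contained in $\USp(2g)$ then yields $\ST^0(\Jac(C_\ell))\simeq \U(1)^g$.

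\textbf{Main obstacle.} The computational heart is the claim that all $2g$ eigenvalues $\zeta^{\pm e_j}$ of $\alpha$ are distinct, which reduces to verifying $e_i\neq e_j$ and $e_i+e_j\neq\ell^2$ for the explicit exponents $e_j = \langle \ell(a_j+1-b_j)-b_j\rangle_{\ell^2}$ ranging over $0\le a_j\le\ell-2$, $a_j+1\le b_j\le\ell-1$; this is asserted in Remark \ref{remark:e_exponent_properties} and is elementary modular arithmetic, so the main conceptual point is really just assembling the centralizer computation correctly and invoking nondegeneracy to rule out a proper subgroup. I would expect the write-up to be short, with the bulk being the centralizer-of-a-torus argument and a careful citation of Aoki's nondegeneracy result to pin down $\ST^0$ exactly.
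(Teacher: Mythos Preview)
Your proposal is correct and follows essentially the same approach as the paper: both invoke nondegeneracy via Proposition~\ref{prop:AST=TL} to identify $\AST^0$ with the identity component of the Lefschetz group, then take a maximal compact subgroup to obtain $\U(1)^g$. The paper's proof is terser, deferring the centralizer computation to \cite[Proposition 3.3]{EmoryGoodson2022}, whereas you carry it out explicitly using the distinct-eigenvalue argument and Remark~\ref{remark:e_exponent_properties}.
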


\begin{proof}
By definition, $\ST^0(\Jac(C_\ell))$ is a maximal compact subgroup of $\AST^0(\Jac(C_\ell))\otimes_{\mathbb{Q}}  \mathbb{C}.$ Since the Jacobian variety $\Jac(C_\ell)$ is a simple nondegenerate abelian variety with CM, it follows from Proposition \ref{prop:AST=TL} and from techniques used in \cite[Proposition 3.3]{EmoryGoodson2022} that we can take the maximal compact subgroup $\U(1)^g$.
\end{proof}

We now define the matrix $\gamma$ that will appear in Theorem \ref{theorem:STfullgroup}.

\begin{definition}\label{def:GammaMatrix}
    Recall the definitions of $e_j$ and $g_i$ in Equations \eqref{eqn:e_exponents} and \eqref{eqn:g_exponents}, respectively. Define $\gamma$ to be the $2g \times 2g$ block-signed permutation matrix whose $ij^{th}$ block is
    \begin{equation*}
    \gamma[i,j] = \begin{cases}
        I & \text{if $g_{i}=e_{j}$} \\
        J & \text{if $g_{i}=\ell^{2}-e_{j}$} \\
        0 & \text{otherwise,} \\
    \end{cases}
\end{equation*}
where $0 \leq i,j \leq g-1$. The $ij^{th}$ block of the inverse of $\gamma$ is 
    \begin{equation*}
        \gamma^{-1}[i,j] = \begin{cases}
            I & \text{if $g_{j}=e_{i}$} \\
            -J & \text{ if $g_{j}=\ell^{2}-e_{i}$} \\
            0 & \text{otherwise.}
        \end{cases} 
    \end{equation*}
Note the swap of $i$ and $j$ in the definitions of $\gamma$ and its inverse. 
\end{definition}

The main result of this section is Theorem \ref{theorem:STfullgroup} where we give explicit generators of the Sato-Tate group.

\begin{theorem}\label{theorem:STfullgroup}
    The Sato-Tate group of $\Jac(C_\ell)$, up to isomorphism in $\USp(2g)$, is
    \begin{equation}\label{eqn:STisomorphism}
    \ST(\Jac(C_\ell))\simeq\langle\U(1)^{g},\gamma\rangle,
    \end{equation}
where $\gamma$ is the matrix defined in Definition \ref{def:GammaMatrix}.
\end{theorem}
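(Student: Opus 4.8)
The strategy is to invoke Proposition~\ref{prop:AST=TL} together with the nondegeneracy of $\Jac(C_\ell)$, so that computing $\ST(\Jac(C_\ell))$ reduces to computing the twisted Lefschetz group $\TL(\Jac(C_\ell))$ and passing to a maximal compact subgroup inside $\USp(2g)$. Proposition~\ref{prop:STidcomponent} already identifies the identity component as $\U(1)^g$ (which is exactly $\LL(\Jac(C_\ell))(\mathrm{id})$, since the centralizer of the image of $\End\otimes\Q\simeq K$ in $\Sp_{2g}$ is a maximal torus because $\alpha$ acts with $2g$ distinct eigenvalues, as recorded in Remark~\ref{remark:e_exponent_properties}). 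Since $K=\Q(\zeta_{\ell^2})$ and $\Gal(K/\Q)\simeq(\Z/\ell^2\Z)^*$ is cyclic, say generated by $\sigma_n$, the component group of $\TL$ is cyclic, and it suffices to produce a single element $\gamma\in\Sp_{2g}$ realizing the action of $\sigma_n$ on $\End(\Jac(C_\ell)_{\overline\Q})\otimes\Q$, i.e.\ satisfying $\gamma\,\alpha\,\gamma^{-1}=\prescript{\sigma_n}{}{\alpha}$; then $\ST(\Jac(C_\ell))=\langle\U(1)^g,\gamma\rangle$ after replacing $\gamma$ by a unitary representative (this is where the ``up to isomorphism in $\USp(2g)$'' in the statement comes from).

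The core of the argument is the verification that the matrix $\gamma$ of Definition~\ref{def:GammaMatrix} does the job. First I would check $\gamma\in\Sp_{2g}$: each row and column block of $\gamma$ contains exactly one nonzero block, which is either $I$ or $J$, because for each $i$ the element $g_i=\langle ne_i\rangle_{\ell^2}$ satisfies exactly one of $g_i\in S$ or $\ell^2-g_i\in S$ (Remark~\ref{remark:e_exponent_properties}, using that $e_i+e_j\ne\ell^2$), and the map $e_j\mapsto g_i$ is a bijection on $S$ since multiplication by $n$ permutes $(\Z/\ell^2\Z)^*$; that $\gamma$ preserves the symplectic form $\diag(J,\ldots,J)$ then follows because $I^\top J I=J$ and $J^\top J J=J$, so $\gamma$ is a block-signed permutation that respects the block decomposition and each block is symplectic. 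The inverse is as displayed because $J^{-1}=-J=J^\top$. Next I would verify the conjugation relation blockwise: $\alpha$ is block-diagonal with $\alpha[j,j]=Z^{e_j}$, and using $\gamma[i,\pi(i)]\in\{I,J\}$ where $\pi(i)$ is the unique index with $e_{\pi(i)}$ matching $g_i$ up to the $\ell^2-(\cdot)$ ambiguity, one computes $(\gamma\alpha\gamma^{-1})[i,i]=\gamma[i,\pi(i)]\,Z^{e_{\pi(i)}}\,\gamma^{-1}[\pi(i),i]$, which equals $Z^{g_i}$ when $g_i\in S$ (block $I$, and $e_{\pi(i)}=g_i$) and equals $J Z^{e_{\pi(i)}} (-J)=\overline{Z}^{\,e_{\pi(i)}}=\overline{Z}^{\,\ell^2-g_i}$ when $g_i\notin S$ (block $J$, and $e_{\pi(i)}=\ell^2-g_i$), using the identity $JZ^tJ^{-1}=\overline{Z}^{\,t}$; comparing with Equation~\eqref{eqn:sigmaalpha} shows $(\gamma\alpha\gamma^{-1})[i,i]=\prescript{\sigma_n}{}{\alpha}[i,i]$, as desired.

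To conclude that $\langle\U(1)^g,\gamma\rangle$ is the full Sato-Tate group and not merely contained in it, I would argue that $\LL(\Jac(C_\ell))(\sigma_n^k)$ is nonempty for every $k$ (it contains $\gamma^k$, since $\gamma^k$ conjugates $\alpha$ to $\prescript{\sigma_n^k}{}{\alpha}$), so $\TL$ has exactly $|\Gal(K/\Q)|=\phi(\ell^2)$ connected components, each a coset $\gamma^k\cdot\U(1)^g$; taking a maximal compact subgroup inside $\USp(2g)$ and using that $\gamma$ already lies in $\USp(2g)$ (its blocks $I,J$ are unitary) yields the stated group. Finally, one should remark that the component group $\ST/\ST^0$ is cyclic of order $\phi(\ell^2)=\ell(\ell-1)$, consistent with the claims in the introduction. \textbf{The main obstacle} I anticipate is the bookkeeping in the conjugation computation: one must track carefully which of the two cases in Equation~\eqref{eqn:sigmaalpha} occurs, keep the index swap in $\gamma$ versus $\gamma^{-1}$ straight, and confirm that $\gamma$ is well-defined (exactly one nonzero block per row/column) — all of which rest on the arithmetic facts in Remark~\ref{remark:e_exponent_properties}, so the cleanest exposition may be to state and prove a small lemma isolating those facts before manipulating $\gamma$.
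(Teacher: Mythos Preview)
Your proposal is correct and follows essentially the same route as the paper: both reduce to the twisted Lefschetz group via nondegeneracy and Proposition~\ref{prop:AST=TL}, verify the blockwise conjugation identity $\gamma\alpha\gamma^{-1}=\prescript{\sigma_n}{}{\alpha}$ using the two cases of Definition~\ref{def:GammaMatrix} and Equation~\eqref{eqn:sigmaalpha}, and then argue that the component group has order $\phi(\ell^2)$ by tracking the Galois action through powers of $\gamma$. Your write-up includes a couple of checks the paper leaves implicit (that $\gamma$ is symplectic and unitary, and that $\LL(\mathrm{id})$ is a torus hence connected), and you phrase the component count via nonemptiness of each $\LL(\sigma_n^k)$ rather than via the order of $\gamma$ modulo $\ST^0$, but these are cosmetic differences.
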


\begin{proof}
    We proved that the identity component for the Sato-Tate group is $\U(1)^{g}$ in Proposition  \ref{prop:STidcomponent}. We now show that the component group is generated by $\gamma$.

We start by proving that $\gamma$ is an element of the twisted Lefschetz group by verifying that $\gamma\alpha\gamma^{-1}= \prescript{\sigma_{n}}{}{\alpha}$, where $\alpha$ is defined in Equation \eqref{eqn:alphaendomorphism}. Note that the $ij^{th}$ block entry of $\gamma\alpha\gamma^{-1}$ can be expressed as 
    \begin{align*}
        \gamma\alpha\gamma^{-1}[i,j] &= \sum_{k=0}^{n}(\gamma\alpha[i,k])\,\gamma^{-1}[k,j] \\
        &= \sum_{k=0}^{n}\left(\sum_{m=0}^{n}\gamma[i,m]\,\alpha[m,k]\right)\gamma^{-1}[k,j].
    \end{align*}
By Equation \eqref{eqn:alphaendomorphism} we have $\alpha[m,k]=0$ unless $m=k$, and so $\sum_{m=0}^{n}\gamma[i,m]\,\alpha[m,k] = \gamma[i,k]\,\alpha[k,k]$. Hence, 
\begin{equation}\label{eqn:gammaalpha_product}
\gamma\alpha\gamma^{-1}[i,j]=\sum_{k=0}^{n}\gamma[i,k]\,\alpha[k,k]\,\gamma^{-1}[k,j].
\end{equation}

Let $\sigma_n$ be a generator of the cyclic group  $\Gal(\Q(\zeta_{\ell^2})/\Q).$ Both $\alpha$ and $\prescript{\sigma_{n}}{}{\alpha}$ are diagonal matrices, and so we first verify that $\gamma\alpha\gamma^{-1}$ is also diagonal. By construction, each row and column of $\gamma$ and of $\gamma^{-1}$ contains exactly one nonzero block entry. Furthermore, the block entry $\gamma[i,k]$ is nonzero if and only if $\gamma^{-1}[k,i]$ is nonzero. Hence, if $i\not=j$ then at least one of $\gamma[i,k]$ or $\gamma^{-1}[k,j]$ equals the zero matrix. Thus, $\gamma\alpha\gamma^{-1}[i,j]=0$ if $i\not=j$ and $\gamma\alpha\gamma^{-1}$ is a diagonal block matrix.

We now consider the diagonal block entries of $\gamma\alpha\gamma^{-1}$. By Equation \eqref{eqn:gammaalpha_product}, 
\begin{align}\label{eqn:gammaalphagamma}
        \gamma\alpha\gamma^{-1}[i,i]   &= \sum_{k=0}^{n}\gamma[i,k]\,\alpha[k,k]\,\gamma^{-1}[k,i].
    \end{align}
Note that $\gamma[i,k]$ is nonzero for exactly one value of $k$, say $k=j$, and there are two possibilities: either  $\gamma[i,j]=I$ or  $\gamma[i,j]=J$.

First suppose  $\gamma[i,j]=I$. By Lemma 4.2 of \cite{GoodsonCatalan} and Equation \eqref{eqn:gammaalphagamma}, we have
$$\gamma\alpha\gamma^{-1}[i,i] = \alpha[j,j] = Z^{e_{j}},$$
with $e_j\in S$. Since $\gamma[i,j]=I$, Definition \ref{def:GammaMatrix} tells us that $g_{i}=e_{j}$. Hence, $ Z^{e_{j}}= Z^{g_{i}}$, which equals $\prescript{\sigma_{n}}{}{\alpha}[i,i]$ by Equation \eqref{eqn:sigmaalpha}.

Now suppose $\gamma[i,j]=J$. By Lemma 4.2 of \cite{GoodsonCatalan} and Equation \eqref{eqn:gammaalphagamma}, we have
$$\gamma\alpha\gamma^{-1}[i,i] = -J\alpha[j,j]J = -JZ^{e_{j}}J=  \overline{Z}^{e_{j}}.$$

Since $\gamma[i,j]=J$, Definition  \ref{def:GammaMatrix} tells us that $e_{j}=\ell^2-g_{i}$. Hence, $ \overline{Z}^{e_{j}}= \overline{Z}^{\ell^2-g_{i}}$. This equals $\prescript{\sigma_{n}}{}{\alpha}[i,i]$ by Equation \eqref{eqn:sigmaalpha} since  $g_i\not\in S$.  Thus, we have shown that $\gamma\alpha\gamma^{-1}[i,j]= \prescript{\sigma_{n}}{}{\alpha}[i,j]$ for all $i$ and $j$. Hence, $\gamma$ is an element of the twisted Lefschetz group.

We now show that $\gamma$ generates the entire component group $\ST(\Jac(C_\ell))/\ST^0(\Jac(C_\ell))$. Since the Jacobian of the curve $C_\ell$ is nondegenerate, we know by 
\cite[Section 6]{Banaszak2015} that  the component group of the Sato-Tate group is isomorphic to $\Gal(K/\Q)=\langle \sigma_n\rangle$, where $K=\Q(\zeta_{\ell^2})$ is the field of definition of the endomorphisms.  Thus, we need to prove that the order of $\gamma$ in $\ST(\Jac(C_\ell))/\ST^0(\Jac(C_\ell))$ equals the order of $\sigma_n$ which equals $\phi(\ell^2)=\ell(\ell-1)$.

We have seen that conjugating the diagonal matrix $\alpha$ by $\gamma$ permutes (and sometimes conjugates) its diagonal block entries consistent with the action of $\sigma_n$ on $\alpha$. Since $\gamma \alpha \gamma^{-1}$ is again a diagonal block matrix, conjugating by $\gamma$ will again just permute (and sometimes conjugate) the diagonal block entries consistent with the action of $\sigma_n$. Hence, $\gamma^d\alpha\gamma^{-d}$ is a diagonal block matrix for any $d$ and we can write $\gamma^d\alpha\gamma^{-d} =\prescript{(\sigma_{n})^d}{}{\alpha}$.

By the construction in Definition \ref{def:GammaMatrix}, the block entries of $\gamma^d$ are of the form $\pm I, \pm J,$ or $0$. Let $d$ be the smallest positive integer such that $\gamma^d\in \ST^0(\Jac(C_\ell))$. Then $\gamma^d$ must be block diagonal, and so $\gamma^d[i,i]=I$ or $-I$ for all $i$. In both cases, this implies that $\gamma^d\alpha\gamma^{-d}[i,i]=\alpha[i,i]$ for all $i$, and so $\gamma^d\alpha\gamma^{-d}=\alpha$. However, we also know that $\gamma^d\alpha\gamma^{-d}=\prescript{(\sigma_{n})^d}{}{\alpha}$, and so we must have $\prescript{(\sigma_{n})^d}{}{\alpha}=\alpha$. Hence $d$ is a multiple of the order of $\sigma_n$, which implies that the order of $\gamma$ in $\ST(\Jac(C_\ell))/\ST^0(\Jac(C_\ell))$ is $\ell(\ell-1)$. 

Hence, $\gamma$ generates the entire component group of the Sato-Tate group $\ST(\Jac(C_\ell))$, and so we have proved the statement of the theorem.

\end{proof}

\section{Moment statistics}\label{sec:momentstats}
Though the generalized Sato-Tate conjecture is known to be true for the Jacobians of the curves we are considering by work of Johansson \cite[Proposition 16]{Johansson2017}, it is still interesting to compare moment statistics coming from the Sato-Tate group and those computed numerically from the normalized $L$-polynomials of the curves. In particular, by computing moment statistics we are able to give a definitive solution to the problem of determining the limiting distribution of the normalized traces and see how much numerical estimates are limited by computational constraints.

Recall that the $n^{th}$ moment of a probability density function is the expected value of the $n^{th}$ powers of the values: $M_n[X]=E[X^n]$. For more background information on moment statistics see, for example, \cite[Section 6.1]{EmoryGoodson2022}, \cite[Sections 4.3, 4.4]{SutherlandNotes}, and \cite[Section 3]{LarioSomoza2018}.

\subsection{Methods of computation} \label{sec:methodsComp}

In this section we describe a variety of techniques for computing the numerical moments statistics associated to the coefficients of the normalized $L$-polynomials of the curves and the theoretical moment statistics coming from the traces of random matrices in the Sato-Tate group. We implemented these techniques in  SageMath \cite{Sage} and used the Python libraries \texttt{SymPy} and \texttt{NumPy} (see \cite{GHgithub}).

Let $C_\ell/\Q$ denote the curve $y^\ell=x(x^\ell-1)$. Recall that for primes $p$ of good reduction, the $a_1$-coefficient of the normalized $L$-polynomial in Equation \eqref{eqn:normalizedLpolynomial} satisfies $a_1=\#C_\ell(\mathbb F_p)$ and, more generally, $a_n$ encodes arithmetic information about the curve over the fields $\mathbb F_p, \mathbb F_{p^2},\ldots, \mathbb F_{p^n}$.  For the numerical moments, we consider the reduction of the curve modulo primes $p$ of good reduction and use the built-in SageMath \cite{Sage} function \texttt{count\_points(n)}, which computes the number of points over the finite fields $\mathbb F_p, \mathbb F_{p^2},\ldots, \mathbb F_{p^n}$ on schemes defined over $\mathbb F_p$. Using the results of Lemmas \ref{lemma:pointcount1} and \ref{lemma:pointcount2}, we are able to speed up our computations by restricting to primes $p\equiv 1 \pmod{\ell^2}$. However, since the genera of our curves are so large, we are limited to a somewhat small bound for $p$. For example, when $\ell=5$ the genus is 10 and we use $p<2^{22}$. Implementing the formula from Lemma \ref{lemma:pointcount3}, unfortunately, did not meaningfully improve computation time.

The theoretical moments are computed from the coefficients of the characteristic polynomial of random conjugacy classes in the Sato–Tate group. By the isomorphism in Equation \eqref{eqn:STisomorphism} of Theorem \ref{theorem:STfullgroup}, we can easily compute the moment sequence by working with matrices in the group $\langle\U(1)^{g},\gamma\rangle$. Throughout, let $U$ be a matrix in $\U(1)^g$ and let $\mu_k$ denote the projection onto the interval $\left[-\binom{2g}{k},\binom{2g}{k}\right]$ of the Haar measure obtained from the Sato-Tate group. We denote the restriction of $\mu_k$ to the component $U\gamma^i$ by ${}^i\mu_k$. In what follows, we describe two different techniques for working with the characteristic polynomials and computing moment statistics.

\subsubsection{The $a_{1}$-Coefficient} \label{sec:a1Umoments}

In this section we describe methods for computing the $a_1$-coefficients of the matrices of the form $\U\gamma^i$ that appear in the isomorphism in Equation \eqref{eqn:STisomorphism}. First note that if $i>0$ then the matrix $\gamma^i$ has no nonzero entries along its diagonal. This can be seen from the definition of $\gamma$ in Definition \ref{def:GammaMatrix} and from the relationship between $\gamma$ and the action of the Galois automorphism $\sigma_n$ described in the proof Theorem \ref{theorem:STfullgroup}. Hence, when $i>0$, the $a_1$-coefficient of the characteristic polynomial of any matrix of the form $\U\gamma^i$ equals 0 and 
$$M_n[{}^i\mu_1]=0.$$

When $i=0$, we simply have the matrix $U\in \U(1)^g$, which is a diagonal matrix of the form
$$U=\diag(u_0, \overline u_0, u_1,\overline u_1, \ldots,  u_{g-1}, \overline u_{g-1}).$$
The $a_1$-coefficient is related to the trace of the matrix, and  we have $
    a_1 = -(\alpha_{0} + \alpha_1+ \ldots + \alpha_{g-1}),$ where $\alpha_{i} := u_{i} + \overline{u}_{i}.$ The moment sequence for each $\alpha_{i}$ is that of the unitary group $\U(1)$
\begin{equation*}
    M[\mu_{\U(1)}]=[1,0,2,0,6,0,20,\ldots].
\end{equation*}
Thus, the $n^{th}$ moment of the $a_1$-coefficient for $\U\gamma^{0}$ is the expected value of the $n^{th}$ power of this sum, which is computed using properties of expected values as follows:
\begin{equation*}\label{eqn:nthmomentcoeff}
    M_{n}[{}^0\mu_1]=\sum_{b_{0}+b_{1}+...+b_{g-1}=n}\binom{n}{b_{0},b_{1},...,b_{g-1}}M_{b_{0}}[\alpha_{0}]M_{b_{1}}[\alpha_{1}]...M_{b_{g-1}}[\alpha_{g-1}].
\end{equation*}
See \cite{GHgithub} for an implementation of this.

\subsubsection{The $a_{2},a_{3},\ldots, a_g$-Coefficients} \label{sec:a23Umoments}
We now describe techniques for computing the remaining coefficients of the characteristic polynomials. Perhaps the most straightforward method to compute these higher traces is to use the built-in \texttt{charpoly()} function in SageMath. In our work, we found that this function sufficed for matrices of the form  $\U\gamma^i$, where $i>0$, but it was prohibitively slow for the matrices $\U\gamma^0$. 

An alternative method for computing the higher traces is to make use of the fact that the coefficients of the characteristic polynomial of a matrix can be expressed in terms of its eigenvalues (see, for example, \cite[Section 13.4-5]{KornKorn1968}). This works particularly well for the diagonal matrices $\U\gamma^0$ since the eigenvalues are exactly the diagonal entries $u_0,\overline u_0, u_1, \overline u_1, \ldots,  u_{g-1}, \overline u_{g-1}$. See \cite{GHgithub} for an implementation of this.

\subsection{Example: $\ell=5$}\label{sec:tableshistograms}

In Table \ref{table:p5a1} we present some of the moment statistics we obtained for the genus 10 curve $C_5: y^5=x(x^5-1)$ using the techniques described above. The numerical moments coming from the $a_1$-coefficient of the normalized $L$-polynomial were computed for primes $p<2^{22}$. Due to the large genus of this curve, computing enough data for the remaining coefficients would require significantly more computing power than was available even when taking into account the point count results in Lemma \ref{lemma:pointcount1} and \ref{lemma:pointcount2}.

The theoretical moments coming from the Sato-Tate group were computed using the techniques described in Section \ref{sec:a1Umoments} (see also \cite{GHgithub}).

\begin{table}[h]
\begin{tabular}{|c|p{2cm}|p{2cm}|p{2cm}|p{2cm}|p{2cm}|}
\hline
 &$M_2$ & $M_4$ & $M_6$ & $M_8$ \\ 
\hline
$a_1$& 1.00479 & 58.4085 & 5363.22 & 646563\\
\hline
$\mu_1$ & 1 & 57 & 5140 & 615545\\
\hline
\end{tabular}
\caption{Table of $a_1$- and $\mu_1$-moments ($p<2^{22}$) for $y^5=x(x^5-1)$.}\label{table:p5a1}
\end{table}

\vspace{-.1in}
In Figure \ref{fig:histogram_p1mod25} we display a histogram of the distribution of the numerical $a_1$-coefficients. Based on the results of Lemmas \ref{lemma:pointcount1} and \ref{lemma:pointcount2}, we restrict to primes $p\equiv 1\pmod{25}$. 

\begin{figure}[h]
        \centering
        \includegraphics[width=.45\textwidth]{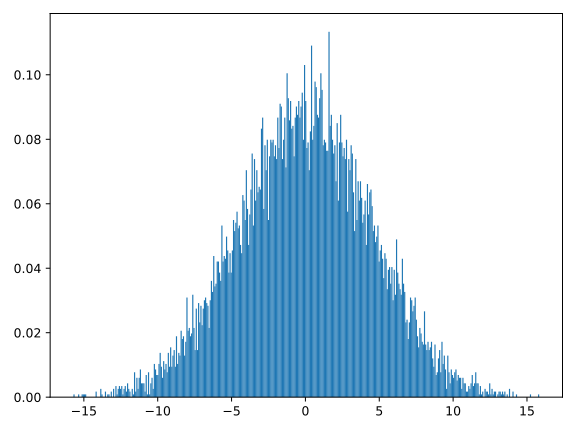} 
        \caption{Histogram of the $a_1$-coefficients for primes $p \equiv 1 \pmod{25}$}\label{fig:histogram_p1mod25}
\end{figure}

\bibliographystyle{alpha}
\bibliography{SatoTatebib}

\end{document}